\newcommand{\diag}{{\rm diag}}
\newcommand{\bmat}{\left[ \begin{array}}
\newcommand{\emat}{\end{array} \right]}
\newcommand{\ignore}[1]{}
\newcommand{\tr}{}
\newtheorem{definition}{Definition}
\newtheorem{theorem}{Theorem}[section]
\newtheorem{corollary}{Corollary}[section]
\begin{document}
\journal{Journal of Computational and Applied Mathematics}

\begin{frontmatter}

\title{Bidiagonal Decompositions \\ of Vandermonde-Type Matrices of Arbitrary Rank\tnoteref{t1,t2}} 

\tnotetext[t1]{
This research was partially supported by Spanish Research Grant PGC2018-096321-B-I00(MCIU/AEI) and by Gobierno de Arag\'on
(E41-17R) . The authors A. Marco and J. J. Mart{\'\i}nez are members of the Reseach Group {\sc asynacs} (Ref. {\sc ct-ce2019/683)} of Universidad de Alcal\'a. 
} 

\tnotetext[t2]{This research was also partially
supported by the Woodward Fund for Applied Mathematics at San Jos\'e State University. The Woodward Fund is a gift from the estate of Mrs.~Marie Woodward in memory of her son, Henry Teynham Woodward. He was an alumnus of the Mathematics Department at San Jos\'e State University and worked with research groups at NASA Ames.}


\author[1]{Jorge Delgado} \ead{jorgedel@unizar.es}
\author[2]{Plamen Koev\corref{cor1}} \ead{plamen.koev@sjsu.edu}
\author[3]{Ana Marco} \ead{ana.marco@uah.es}
\author[4]{Jos\'e-Javier Mart\'inez} \ead{jjavier.martinez@uah.es}
\author[5]{Juan Manuel Pe\~na} \ead{jmpena@unizar.es}
\author[6]{Per-Olof Persson} \ead{persson@berkeley.edu}
\author[7]{Steven Spasov} \ead{stevenspasov@gmail.com}

\cortext[cor1]{Corresponding author}

\ignore{
\fntext[fn1]{This is the first author footnote.}
\fntext[fn2]{Another author footnote, this is a very long
footnote and it should be a really long footnote. But this footnote is not yet sufficiently long enough to make two lines of footnote text.}
\fntext[fn3]{Yet another author footnote.}
}

\affiliation[1]{organization={Departamento de Matem\' atica Aplicada, Universidad de Zaragoza}, addressline={Edificio Torres Quevedo}, city={Zaragoza}, postcode={50019}, country={Spain}}
\affiliation[2]{organization={Department of Mathematics, San Jos\' e State University},  city={San Jose}, postcode={95192}, country={U.S.A.}}
\affiliation[3]{organization={Departamento de F\'isica y Matem\'aticas, Universidad de Alcal\'a}, addressline={Alcal\'a de Henares}, city={Madrid}, postcode={28871}, country={Spain}}
\affiliation[4]{organization={Departamento de F\'isica y Matem\'aticas, Universidad de Alcal\'a}, addressline={Alcal\'a de Henares}, city={Madrid}, postcode={28871}, country={Spain}}
\affiliation[5]{organization={Departamento de Matem\' atica Aplicada, Universidad de Zaragoza}, addressline={Edificio de Matem\'aticas}, city={Zaragoza}, postcode={50019}, country={Spain}}
\affiliation[6]{organization={Department of Mathematics, University of California},  city={Berkeley}, postcode={94720}, country={U.S.A.}}
\affiliation[7]{organization={Sofia High School of Mathematics}, addressline={61 Iskar St.}, city={Sofia}, postcode={1000}, country={Bulgaria}}

\ignore{

\author{Jorge Delgado \and Plamen Koev \and Ana Marco \and Jos\'e-Javier Mart\'inez  \and Juan Manuel Pe\~na \and Per-Olof Persson \and Steven Spasov}
\institute{
Departamento de Matem\' atica Aplicada, Universidad de Zaragoza, Edificio Torres Quevedo, Zaragoza 50019, Spain
\\
\email{jorgedel@unizar.es}
\\
Department of Mathematics, San Jose State University, 
San Jose, CA 95192, U.S.A.
\\
\email{plamen.koev@sjsu.edu}
\and
\\
Departamento de Fisica y Matem\'aticas, Universidad de Alcal\'a, Alcal\'a de Henares, Madrid 28871, Spain
\\
\email{ana.marco@uah.es}
\and
\\
Departamento de Fisica y Matem\'aticas, Universidad de Alcal\'a, Alcal\'a de Henares, Madrid 28871, Spain
\\
\email{jjaviermartinez@uah.es}
\and
\\
Departamento de Matem\'atica Aplicada, Universidad de Zaragoza, Edificio de Matem\'aticas, Zaragoza 50009, Spain
\\
\email{jmpena@posta.unizar.es}
\and
\\
Department of Mathematics, University of California -- Berkeley, 
Berkeley, CA 94720, U.S.A.
\\
\email{persson@berkeley.edu}
\and
\\
Sofia High School of Mathematics, 61 Iskar St, Sofia 1000, Bulgaria
\\
\email{stevenspasov@gmail.com}
}

} 


\begin{keyword}  Vandermonde matrix,
totally nonnegative matrix \sep bidiagonal decomposition \sep eigenvalue




\MSC 65F15 \sep 15A23 \sep 15B48 \sep 15B35
\end{keyword}


\begin{abstract} 
We present a method to derive new explicit expressions for bidiagonal decompositions of Vandermonde and related matrices such as the ($q$-, $h$-) Bern\-stein-Vandermonde ones, among others. These results generalize the existing expressions for nonsingular matrices to matrices of arbitrary rank. \tr{For totally nonnegative matrices of the above classes, the new decompositions can be computed efficiently and to high relative accuracy componentwise in floating point arithmetic. In turn, matrix computations (e.g., eigenvalue computation) can also be performed efficiently and to high relative accuracy.}
\end{abstract}

\end{frontmatter}


\section{Introduction}
A matrix is totally nonnegative (TN) if all of its minors are nonnegative \cite{ando,fallatjohnsontp,karlin}. The bidiagonal decompositions of the TN matrices have become an important tool in the study of these matrices \cite{fallat01,gascapena92} and for performing matrix computations with them accurately and efficiently \cite{koev,koevTP2,koevSTN}. The bidiagonal decompositions of many classical TN matrices such as Vandermonde and many related matrices are well known, but contain singularities when some of the nodes coincide. For example, a $3\times 3$ Vandermonde matrix 
with nodes $x,y,z$ is decomposed as \cite{koev}:
\begin{align}
\bmat{ccc} 
1 & x & x^2 \\
1 & y & y^2 \\
1 & z & z^2 
\emat
&=
\bmat{ccc}
1 \\
& 1\\
& 1 & 1
\emat
\bmat{ccc}
1 \\
1 & 1\\
& \frac{z-y}{y-x} & 1
\emat
\bmat{ccc}
1 \\
& y-x\\
&  & (z-x)(z-y)
\emat
\nonumber
\\
& \phantom{=}\mbox{ }\;\times
\bmat{ccc}
1 & x\\
& 1 & y\\
&  & 1
\emat
\bmat{ccc}
1   \\
& 1& x\\
& & 1 
\emat.
\label{vand1}
\end{align}
This decomposition is undefined when $x=y$, which is unfortunate, since the Vandermonde matrix is very well defined for any values of the nodes.

By relaxing the requirement that the bidiagonal factors have ones on the main diagonal, we show how to rearrange the factors in \eqref{vand1}, so that the new decomposition contains no singularities and is valid for any values of the nodes and is thus valid for a Vandermonde matrix of arbitrary rank.
For example, the $3\times 3$ Vandermonde from \eqref{vand1} can be decomposed as 
\begin{align}
\bmat{ccc} 
1 & x & x^2 \\
1 & y & y^2 \\
1 & z & z^2 
\emat
&=
\bmat{ccc}
1 \\
& 1\\
& 1 & z-y
\emat
\bmat{ccc}
1 \\
1 & y-x \\
& 1 & z-x
\emat
\bmat{ccc}
1 \\
& 1 \\
&  & 1
\emat
\nonumber 
\\
& \phantom{=}\mbox{ }\;\times
\bmat{ccc}
1 & x\\
& 1 & y\\
&  & 1
\emat
\bmat{ccc}
1   \\
& 1& x\\
& & 1 
\emat,
\label{Vand3-2}
\end{align}
which is valid for any values of the nodes $x,y,z$.

Many classes of other TN matrices share the exact same singularities in their bidiagonal decompositions, e.g., the Bernstein-Vandermonde, their $q$-, $h$-, and rational generalizations, Lupa\c s,  Said-Ball matrices, etc.~\cite{koev,DP13,DP15qBV,delgadopena17,MM07,marcomartinez10,MM13,MMV19,marcomartinez16}. We call these matrices Vandermonde-type below (see section \ref{sec_vand}).

The method that allowed us to obtain the decomposition \eqref{Vand3-2} from \eqref{vand1} applies to all Vandermonde-type matrices (section \ref{sec_method}) and is the main contribution of this paper.
\tr{The starting point for our method is the existing bidiagonal decompositions of the Vandermonde-type matrices, which are valid only when these matrices are both TN and nonsingular. Our method takes these decompositions as a starting point and produces new bidiagonal decompositions valid for matrices of arbitrary rank, regardless of whether they are TN or not -- see Corollary \ref{Cor1}.}

While the computational complexity of the transformed bidiagonal decomposition of an $n\times n$ matrix remains $O(n^2)$, the new expressions are simpler. 

In terms of accuracy, just like their non-singular counterparts, the new decompositions remain insusceptible to subtractive cancellation, and thus all of the entries of these decompositions can be computed to high relative accuracy when the matrix is TN. By ``high relative accuracy'' we mean that for each entry its sign and most of its leading significant digits are computed correctly (see section \ref{sec_accuracy}).  

The Vandermonde-type matrices have many applications that are well
referenced in the papers we cite above that deal with the nonsingular case.
For example, the Lupa\c s matrices (section \ref{sec_Lupas}) have direct applications in CAGD \cite{delgadopena17, hanqui2014}. \tr{For TN Vandermonde-type matrices, the new results in this paper allow for matrix computations with them to be performed to high relative accuracy very efficiently (in $O(n^3)$ time) using the methods of \cite{koevSTN} now also when these matrices are singular -- see section \ref{sec_numexp} for a numerical example.}

The efficiency and high relative accuracy is particularly relevant, for example, in eigenvalue computations since the corresponding matrices are unsymmetric. The error bounds for the eigenvalues computed by 
the conventional algorithms (such as the ones in LAPACK \cite{lapackmanual3}) \cite{DemmelMA221} imply that none of the eigenvalues are guaranteed to be accurate, although the largest ones typically are -- see the example in section~\ref{sec_numexp}. In contrast, the results of this paper now allow for all eigenvalues to be efficiently computed to high relative accuracy and, in particular, the zero eigenvalues are computed exactly.

The paper is organized as follows. In section \ref{sec_bidiag} we review the bidiagonal decompositions of nonsingular TN matrices. In section \ref{sec_vand} we present the formal definition of Vandermonde-type matrix and, in section \ref{sec_sbdvand}, we show how to remove the singularities in its bidiagonal decomposition. We demonstrate how our method applies to the particular class of $q$-Bernstein-Vandermonde matrices in section \ref{sec_method}. Our method is also directly applicable to derivative matrices, such as generalized Vandermonde matrices, Laguerre matrices, etc., which are submatrices or products of Vandermonde matrices and other nonsingular TN matrices (section \ref{sec_derivative}). In section \ref{sec_nn}, we present a method to make the bottom right-hand corner entry of all bidiagonal factors in the corresponding bidiagonal decompositions equal to 1 -- this is a requirement for the methods of \cite{koevSTN} to work. We discuss accuracy issues in section \ref{sec_accuracy} and present numerical experiments in section \ref{sec_numexp}.
The explicit formulas for the decompositions of several Vandermonde-type matrices are in the Appendix.

\section{Bidiagonal decompositions of TN matrices}
\label{sec_bidiag}
Our focus in this paper is on the class of TN matrices, but the formulas and methods we present are valid without the requirement of total nonnegativity \tr{(Corollary \ref{Cor1} below)}. The bidiagonal decompositions of the TN matrices serve as a major tool in their study and computations with them. We review those here \cite{fallatjohnsontp,fallat01}.

A nonsingular $n\times n$ TN matrix $A$ can be uniquely factored as
\begin{equation}
A=L^{(1)} L^{(2)} \cdots L^{(n-1)} D U^{(n-1)} U^{(n-2)} \cdots U^{(1)},
\label{bidiag}
\end{equation}
where $L^{(i)}$ are $n\times n$ nonnegative and unit lower bidiagonal, $D$ is $n\times n$ nonnegative and diagonal, and $U^{(i)}$ are $n\times n$ nonnegative and unit upper bidiagonal. For the nontrivial entries of the factors $L^{(k)}$ and $U^{(k)}$, respectively, we have $L_{i+1,i}^{(k)}=U_{i,i+1}^{(k)}=0$ for $i< n-k$.

We call the above decomposition \eqref{bidiag}, the {\em ordinary bidiagonal decomposition} of $A$ to distinguish it from what we will define below as the singularity-free bidiagonal decomposition of a TN matrix.

The decomposition \eqref{bidiag} occurs naturally in the process of complete Neville elimination when adjacent rows and columns are used for elimination \cite{fallat01,gascapena92}. 

There are exactly $n^2$ nontrivial entries which parameterize the ordinary bidiagonal decomposition \eqref{bidiag}. Following  \cite[sec.~4]{koev}, those nontrivial entries can be conveniently stored in an $n\times n$ matrix $M$, denoted $M={\mathcal BD}(A)$, where:
\begin{align}
m_{ij} &= L_{i,i-1}^{(n-i+j)}, \quad i>j,
\nonumber
\\
m_{ij} &= U_{j-1,j}^{(n-j+i)}, \quad i<j,
 \label{mij2}
\\
m_{ii}&=D_{ii}.
\nonumber
\end{align}
Namely, for $k=1,2,\ldots, n-1$,
$$
L^{(k)}=\bmat{ccccccc}
1\\
 & \ddots\\
& m_{n-k+1,1} & 1 \\
&& m_{n-k+2,2} & 1 \\
&&& \ddots & \ddots \\
&&&&m_{nk} & 1
\emat,
\nonumber
$$
$$
U^{(k)}=\bmat{ccccccc}
1\\
 & \ddots\\
&& 1 & m_{1,n-k+1}  \\
&&& 1 & m_{2,n-k+2}  \\
&&&& \ddots & \ddots \\
&&&&& 1 & m_{nk}\\
&&&&&&1
\emat,
$$
and 
$$
D=\mbox{diag}(m_{11}, m_{22}, \ldots, m_{nn}).
$$

For $i\ne j$, the $m_{ij}$ are the multipliers of the complete Neville elimination with which the $(i,j)$ entry of $A$ is eliminated and $m_{ii}$ are the diagonal entries of $D$. 

For example, if $A$ is the $3\times 3$ Vandermonde matrix from \eqref{vand1}, then the matrix $M={\mathcal BD}(A)$ is:
\begin{equation}
M=
\bmat{ccc}
1 & x & x\\
1& y-x& y\\
1&  \frac{z-y}{y-x}  & (z-x)(z-y)
\emat.
\label{MBD}
\end{equation}

In this paper, we assume that explicit expressions for the entries of the ordinary bidiagonal decomposition \eqref{bidiag} are given. For the matrices we consider in this paper, those expressions contain singularities when some of the nodes coincide and we work to remove those singularities.

To that end, we drop the requirement that the matrices $L^{(i)}$ and $U^{(i)}$ have ones on the main diagonal. We define a new bidiagonal decomposition, which we call {\em singularity-free bidiagonal decomposition} and denote it by $\mathcal{SBD}(A)$, as
\begin{equation}
A=L_1L_2\cdots L_{n-1} D U_{n-1} U_{n-2} \cdots U_1.
\label{SBD}
\end{equation}
The matrix $D$ is nonnegative and diagonal. The factors $L_i$ and $U_i$ are nonnegative lower and upper bidiagonal, and have the same nonzero patterns as $L^{(i)}$ and $U^{(i)}$, respectively, $i=1,2,\ldots,n-1$.   Namely, $(L_k)_{i+1,i} = (U_k)_{i,i+1}= 0$ for $i<n-k$ (see theorem 2.1 in \cite{koev}).

 Following \cite{koevSTN}, the nontrivial entries of $\mathcal{SBD}(A)$ are stored in two matrices: $B$, which is $n\times n$ and $C$, which is $(n+1)\times (n+1)$:
$$
[B,C]=\mathcal{SBD}(A).
$$
As with $\mathcal{BD}(A)$, the matrix $B$ stores the nontrivial offdiagonal entries of $L_i$ and $U_i$ as well as the diagonal entries of $D$, exactly as in \eqref{mij2}:
\begin{align*}
b_{ij} &= (L_{n-i+j})_{i,i-1}, i>j,
\nonumber
\\
b_{ij} &= (U_{n-j+i})_{j-1,j}, i<j,
\\
b_{ii}&=D_{ii}.
\nonumber
\end{align*}
The matrix $C$ stores the diagonal entries of $L_i$ and $U_i$ as
$$
c_{ij} = \left\{ \begin{array}{ll}
(L_{n-i+j})_{i-1,i-1}, & i>j,\\
(U_{n-j+i})_{j-1,j-1}, & i<j.
\end{array}
\right.
$$
In this arrangement, $c_{ij},\, i>j$, is the diagonal entry in $L_{n-i+j}$ immediately above $b_{ij}$ and similarly for $i<j$ and $U_{n-j+i}$. The entries $c_{ii}, i=1,2,\ldots,n+1$ as well as the entries $c_{1,n+1}$ and $c_{n+1,1}$ are unused. This is the same construction as the one given in formula (9) in \cite{koevSTN}, except that we now allow for the $(n,n)$ entry in $L_i$ and $U_i$ to be any nonnegative number and not necessarily equal 1. This is the reason we need an $(n+1)\times (n+1)$ matrix to host the entries $c_{ij}$: the nontrivial diagonal entries of $L_1, L_2,\ldots, L_{n-1}$ are of lengths $2, 3, \ldots, n$, and similarly for the $U_i$'s.

As we explain in section \ref{sec_nn}, we can always make the $(n,n)$ entry in the $L_i$'s and $U_i$'s equal to 1, so that the algorithms of \cite{koevSTN} be used, but the formulas are more elegant without that restriction, which can be imposed after in software.

Thus, in this paper we start with the known ordinary bidiagonal decomposition $M=\mathcal{BD}(A)$ of a nonsingular matrix $A$, \eqref{bidiag}, and produce the singularity-free bidiagonal decomposition $[B,C]=\mathcal{SBD}(A)$, \eqref{SBD}, which is defined for a matrix $A$ of arbitrary rank.

The new singularity-free bidiagonal decomposition \eqref{SBD} is not unique, but this is inconsequential -- for the purposes of accurate computations, any accurate decomposition of a TN matrix as a product of nonnegative bidiagonals is an equally good input  \cite{koev}.

\section{Vandermonde-type matrices}
\label{sec_vand}

The ordinary Vandermonde matrices are the starting point of our investigation as the approach extends analogously to all other classes of matrices in this paper.

An $n\times n$ Vandermonde matrix $A$ with nodes $x_1,x_2,\ldots,x_n$ is defined as 
\begin{equation}
A=\left[
x_i^{j-1}\right]_{i,j=1}^n.
\label{def_vand}
\end{equation}
When the nodes are distinct, it has an ordinary bidiagonal decomposition~\eqref{bidiag},
$V={\cal BD}(A)$, such that \cite{koev}
\begin{equation}
\arraycolsep=1.4pt\def\arraystretch{2.2}
\begin{array}{lcll}
v_{ii}&= &\displaystyle\prod_{k=1}^{i-1}(x_i-x_k), & \quad  1\le i\le n,\\
v_{ij}  &= & \displaystyle\prod_{k=i-j}^{i-2}\frac{x_{i}-x_{k+1}}{x_{i-1}-x_k}, & \quad 1\le j< i\le n,\\
v_{ij} &= &x_i, & \quad 1\le i< j \le n.
\end{array}
\label{Vandermonde_bidiag}
\end{equation}

In the following definition, we take a more general stance, where the lower bidiagonal factors and the diagonal have additional factors, which contain no singularities.

\begin{definition}
\tr{
An $n\times n$ matrix $A$ is said to be of {\em Vandermonde-type} with nodes $x_1,x_2,\ldots,x_n$ on an (open or closed) interval $D$, if it satisfies all of the conditions below:
\begin{enumerate}
\item $a_{ij} = f_j(x_i)$, where $f_j(x)$ is a rational function of $x$ for $i,j=1,2,\ldots,n$;
\item $A$ is TN when $x_1<x_2<\cdots<x_n$ and all $x_i\in D, \; i=1,2,\ldots,n$;
\item $A$ has an ordinary bidiagonal decomposition, $M={\cal BD}(A)$, such that
\begin{equation}
m_{ij}= s_{ij}v_{ij},
\label{Vandermonde_bidiag2}
\end{equation}
for $1\le j\le i\le n$, where $v_{ij}$ are defined as in \eqref{Vandermonde_bidiag}, and the entries $s_{ij}$ for $1\le j\le i\le n$ and the entries $m_{ji}$ for $1\le j<i\le n$ are rational functions of $x_1, x_2,\ldots, x_n$, with no singularities when $x_i\in D, \, i=1,2,\ldots,n$. 
\end{enumerate}
A Vandermonde-type matrix will thus be fully specified by the entries $s_{ij}$ for $1\le j\le i\le n$ and the entries $m_{ji}$ for $1\le j<i\le n$.
}
\label{def_vtype}
\end{definition}

From the above definition, the ordinary Vandermonde matrix \eqref{def_vand} is a Van\-der\-monde-type matrix \tr{on $(0,\infty)$} with $s_{ij} = 1$ for $1\le j\le i\le n$ and $m_{ji}=v_{ji} = x_j$ for $1\le j<i\le n$.

In the literature, the Vandermonde-type matrices sometimes have $n+1$ nodes, with the indexing starting at 0, e.g., $x_0,x_1,\ldots,x_n$.  For consistency, in this paper, all Van\-der\-monde-type matrices are $n\times n$ and their nodes are $x_1,x_2,\ldots,x_n$.

\section{Singularity-free bidiagonal decompositions of Vandermonde-type matrices}
\label{sec_sbdvand}

In this section, we show how to remove the singularities in the ordinary bidiagonal decomposition \eqref{bidiag} for Vandemonde-type matrices and obtain a sin\-gu\-la\-ri\-ty-free bidiagonal decomposition \eqref{SBD}.

If $M={\mathcal BD}(A)$ is the ordinary bidiagonal decomposition \eqref{bidiag} of a Van\-der\-monde-type matrix, from \eqref{Vandermonde_bidiag2} we have $m_{ii} = s_{ii} v_{ii}, \, i=1,2,\ldots,n$, and thus the diagonal factor $D$ can be factored as $D=D_n\cdot D'$,  so that
$$
A=L^{(1)} L^{(2)} \cdots L^{(n-1)} \cdot D_n \cdot D' \cdot U^{(n-1)} \cdots U^{(1)},
$$
where 
\begin{equation}
D_n=\diag(v_{11},\ldots,v_{nn})
\label{Dn}
\end{equation}
and $D' = \diag(s_{11},\ldots,s_{nn})$.

Since there are no singularities in the product $D'\cdot U^{(n-1)} \cdots U^{(1)}$, it suffices to remove the singularities in the remaining factors, i.e., in the product 
\begin{equation}
L^{(1)} L^{(2)} \cdots L^{(n-1)}  D_n.
\label{product}
\end{equation}

\begin{theorem}
Let $L^{(i)}, \, i=1,2,\ldots, n-1$ be the lower bidiagonal factors in the ordinary bidiagonal decomposition \eqref{bidiag} of a Vandermonde-type matrix, whose nontrivial entries are defined as in \eqref{Vandermonde_bidiag2} and $D_n$ be defined as in \eqref{Dn}.
Then
$$
L_1 L_2 \cdots L_{n-1} = L^{(1)} L^{(2)} \cdots L^{(n-1)} D_n, 
$$
where
\begin{equation}
L_i\! =\! \bmat{ccccccc}
1 \\
& \ddots \\
&& \!\!\!\!\!\!\!\! 1\\
&&\!\!\!\!\!\!\!\!s_{n-i+1,1} & x_{n-i+1} - x_{n-i} \\
&&&s_{n-i+2,2} & x_{n-i+2} - x_{n-i} \\
&&&&\!\!\!\!\!\!\! \ddots & \ddots \\
&&&&&\!\!\!\!\!\!\!\!\!\!\!\!\!\!\!\!  s_{ni} & \!\!\!\!\!\!\!\! x_n-x_{n-i}
\emat \! .
\label{L_i}
\end{equation}
\end{theorem}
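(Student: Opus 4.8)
The plan is to absorb $D_n$ into the product $L^{(1)}\cdots L^{(n-1)}$ by sliding it leftward one bidiagonal factor at a time. For $k=0,1,\ldots,n-1$ I would introduce the diagonal matrix
$$
D^{(k)}=\diag\bigl(d^{(k)}_1,\ldots,d^{(k)}_n\bigr),\qquad d^{(k)}_p=\prod_{t=n-k}^{p-1}(x_p-x_t),
$$
with the convention that an empty product equals $1$. Then $D^{(0)}=I$, and since $d^{(n-1)}_p=\prod_{t=1}^{p-1}(x_p-x_t)=v_{pp}$ we also have $D^{(n-1)}=D_n$ by \eqref{Dn} and \eqref{Vandermonde_bidiag}. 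The heart of the proof would be the single ``commutation'' identity
\begin{equation}
L^{(k)}D^{(k)}=D^{(k-1)}L_k,\qquad k=1,2,\ldots,n-1,
\label{commstep}
\end{equation}
since iterating it (for $k=n-1,n-2,\ldots,1$) telescopes to the claim:
$$
L^{(1)}\cdots L^{(n-1)}D_n=L^{(1)}\cdots L^{(n-2)}\bigl(L^{(n-1)}D^{(n-1)}\bigr)=L^{(1)}\cdots L^{(n-2)}D^{(n-2)}L_{n-1}=\cdots=D^{(0)}L_1\cdots L_{n-1}=L_1\cdots L_{n-1}.
$$

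To prove \eqref{commstep} I would first note that both sides are lower bidiagonal with nonzero subdiagonal confined to rows $n-k+1,\ldots,n$ -- that is the support pattern of $L^{(k)}$ and of $L_k$ in \eqref{L_i}, and it is preserved when a diagonal matrix is multiplied in -- so it suffices to compare the $n$ diagonal entries and the $k$ subdiagonal entries in rows $p=n-k+1,\ldots,n$. On the diagonal, $(L^{(k)}D^{(k)})_{pp}=1\cdot d^{(k)}_p$ and $(D^{(k-1)}L_k)_{pp}=d^{(k-1)}_p\,(L_k)_{pp}$, where by \eqref{L_i} one has $(L_k)_{pp}=1$ for $p\le n-k$ and $(L_k)_{pp}=x_p-x_{n-k}$ for $p>n-k$; for $p\le n-k$ both sides are $1$, while for $p>n-k$ prepending the factor $x_p-x_{n-k}$ to the product defining $d^{(k-1)}_p$ produces exactly $d^{(k)}_p$.

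The only step with genuine content is the subdiagonal. Writing $p=n-k+\ell$ with $1\le\ell\le k$, the subdiagonal entry of $L^{(k)}$ in row $p$ is $m_{p,\ell}=s_{p,\ell}v_{p,\ell}$ (by \eqref{mij2} and \eqref{Vandermonde_bidiag2}) and that of $L_k$ is $s_{p,\ell}$ (by \eqref{L_i}), so after the diagonal scalings \eqref{commstep} reduces to the scalar identity $v_{p,\ell}\,d^{(k)}_{p-1}=d^{(k-1)}_p$. Expanding the left-hand side via \eqref{Vandermonde_bidiag},
$$
v_{p,\ell}\,d^{(k)}_{p-1}=\left(\prod_{t=p-\ell}^{p-2}\frac{x_p-x_{t+1}}{x_{p-1}-x_t}\right)\prod_{t=p-\ell}^{p-2}(x_{p-1}-x_t)=\prod_{t=p-\ell}^{p-2}(x_p-x_{t+1})=\prod_{t=p-\ell+1}^{p-1}(x_p-x_t)=d^{(k-1)}_p,
$$
the denominators of $v_{p,\ell}$ cancelling against $d^{(k)}_{p-1}$ and the surviving numerator being reindexed (when $\ell=1$ all products are empty and the identity reads $1=1$). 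All of these are identities of rational functions in $x_1,\ldots,x_n$, and this very cancellation is where the poles of the individual factors disappear, leaving the polynomial entries of $L_1\cdots L_{n-1}$. I expect the only friction in writing the argument to be the index bookkeeping -- keeping straight which rows are ``active'' in $L^{(k)}$, in $L_k$, and in each $D^{(k)}$ -- which is precisely why I would organize everything around \eqref{commstep} so that this verification is carried out exactly once.
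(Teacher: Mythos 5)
Your proof is correct and follows essentially the same route as the paper: you slide the diagonal factor right-to-left through the product via the one-step commutation identity $L^{(k)}D^{(k)}=D^{(k-1)}L_k$ (your $D^{(k)}$ is exactly the paper's intermediate diagonal $D_{k+1}$), verified entrywise on the diagonal and subdiagonal, with the cancellation $v_{p,\ell}\,d^{(k)}_{p-1}=d^{(k-1)}_p$ doing the work. The only cosmetic difference is that you state and prove the general step once, whereas the paper verifies the first step in detail and invokes the analogous argument for the rest.
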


\begin{proof}
It suffices to observe just the initial step, i.e., that
\begin{equation}
 D_{n-1} \cdot L_{n-1}=L^{(n-1)} \cdot D_n,
\label{first_step}
\end{equation} 
where
$ D_{n-1}$ is a diagonal matrix, such that $(D_{n-1})_{ii}=1$ for $i=1,2,$ and
$$
(D_{n-1})_{ii} = \prod_{k=2}^{i-1} (x_i-x_k), \quad i=3,4,\ldots,n.
$$
In other words, the $(n-1)\times (n-1)$ bottom right principal submatrix of $D_{n-1}$ is the diagonal factor in the ordinary bidiagonal decomposition of an $(n-1)\times (n-1)$ ordinary Vandermonde matrix with nodes $x_2,x_3,\ldots,x_n$.

Since both $D_n$ and $D_{n-1}$ are diagonal and both $L^{(n-1)}$ and $L_{n-1}$ are lower bidiagonal, we have bidiagonals on each side of \eqref{first_step}. To establish that those are equal, it suffices to show that the corresponding diagonal and offdiagonal entries are the same. Since $L^{(n-1)}$ is unit lower bidiagonal, the diagonal entry in the product on the right, $L^{(n-1)}D_n$ is $(D_n)_{ii}$. Thus, for $i>1$,
$$
(D_n)_{ii} = \prod_{k=1}^{i-1} (x_i-x_k) = 
\left(\prod_{k=2}^{i-1} (x_i-x_k)\right) \cdot (x_i-x_1) = (D_{n-1})_{ii} (L_{n-1})_{ii},
$$
since $(L_{n-1})_{ii} =  x_i-x_1$ by \eqref{L_i}. Also, $(D_n)_{11}=1$, so the diagonal entries on both sides of \eqref{first_step} are equal.

Since $(L_{n-1})_{i+1,i}=s_{i+1,i}$, the offdiagonal entry in position $(i+1,i)$ on the left side of \eqref{first_step} is $s_{i+1,i}(D_{n-1})_{i+1,i+1}$. Also, from \eqref{mij2},
$$
L_{i+1,i}^{(n-1)} = m_{i+1,i} = s_{i+1,i}v_{i+1,i} = s_{i+1,i}\prod_{j=1}^{i-1}\frac{x_{i+1}-x_{j+1}}{x_i-x_j}
$$
and thus
\begin{align*}
s_{i+1,i} (D_{n-1})_{i+1,i+1} 
&= s_{i+1,i} \prod_{j=2}^i (x_{i+1}-x_j) \\
&= s_{i+1,i} \prod_{j=1}^{i-1} (x_{i+1}-x_{j+1}) \\
& =  s_{i+1,i} \prod_{j=1}^{i-1}\frac{x_{i+1}-x_{j+1}}{x_i-x_j} \prod_{j=1}^{i-1} (x_i-x_j)\\
& =  s_{i+1,i} v_{i+1,i} (D_n)_{ii}\\
& = (L^{(n-1)})_{i+1,i}\, ( D_n)_{ii},
\end{align*}
which is the $(i+1,i)$ entry on the right hand side of \eqref{first_step}.

Therefore the offdiagonal entries on each side of \eqref{first_step} are also equal and \eqref{first_step} is fully established.

Completely analogously, we establish that $D_{k-1} L_{k-1} = L^{(k-1)}D_{k}$ for $k=n-1, n-2, \ldots, 2,$ and for the product \eqref{product} we have
\begin{align*}
 L^{(1)} L^{(2)} \cdots \underline{L^{(n-1)} D_n} & = 
  L^{(1)} L^{(2)} \cdots \underline{L^{(n-2)} D_{n-1}} L_{n-1} \\
  & =
  L^{(1)} L^{(2)} \cdots \underline{ L^{(n-3)} D_{n-2}} L_{n-2}  L_{n-1} \\
  & = \cdots \\
    & = \underline{L^{(1)} D_2} L_2 \cdots L_{n-1} \\
  & = \underline{D_1 L_1} L_2 \cdots L_{n-1},\quad \mbox{ and since $D_1=I$}, \\
  & =L_1 L_2 \cdots L_{n-1}.
\end{align*}
 The factors that change on each step are underlined. 

In other words, we ``walk'' the matrix $D_n$ through the product \eqref{product} right-to-left,  canceling the denominators in each $L^{(k)}$ and factoring a slightly different diagonal matrix, $D_{k-1}$, out the left until we end up with just $D_1=I$, which we drop.
\end{proof}

\tr{
The ordinary bidiagonal decompositions exist for nonsingular TN matrices, however very simple singular TN matrices such as  $\big[\genfrac{}{}{0pt}{}{0 \;\;0}{1\;\;0}\big]$ (let alone non-TN ones) don't have ordinary bidiagonal decompositions. Thus the fact that {\em all} Vandermonde-type matrices have singularity-free bidiagonal decompositions regardless of their rank or total nonnegativity is remarkable, which we prove below.
}

\begin{corollary}
\tr{
The singularity-free bidiagonal decomposition \eqref{SBD} of a Van\-der\-monde-type matrix $A$ is valid for any complex values of the nodes $x_1,x_2,\ldots,x_n$ for which the matrices on both sides of \eqref{SBD} are defined.
}
\label{Cor1}
\end{corollary}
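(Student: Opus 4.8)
The plan is to combine the preceding Theorem with an elementary identity-of-rational-functions argument. The Theorem already performs the substantive computation; what remains is to explain why the decomposition it produces, obtained a priori only in the regime where $A$ is nonsingular and totally nonnegative, persists for arbitrary complex nodes.

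First I would write down the decomposition explicitly. Fix nodes with $x_1<x_2<\cdots<x_n$, all in $D$. By condition 3 of Definition \ref{def_vtype}, $A$ then has the ordinary bidiagonal decomposition \eqref{bidiag} with $m_{ii}=s_{ii}v_{ii}$ (in particular $A$ is nonsingular there, so that \eqref{bidiag} is available). Factor $D=D_nD'$ with $D_n$ as in \eqref{Dn} and $D'=\diag(s_{11},\dots,s_{nn})$, and use the Theorem to rewrite $L^{(1)}\cdots L^{(n-1)}D_n$ as $L_1\cdots L_{n-1}$, obtaining
$$
A=L_1L_2\cdots L_{n-1}\,D'\,U^{(n-1)}U^{(n-2)}\cdots U^{(1)},
$$
with the $L_i$ as in \eqref{L_i}. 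This is a decomposition of the form \eqref{SBD}. The key observation is that every entry occurring in $L_1,\dots,L_{n-1}$, $D'$ and $U^{(n-1)},\dots,U^{(1)}$ is a node difference $x_p-x_q$, a node $x_i$, a constant $0$ or $1$, one of the $s_{ij}$ with $1\le j\le i\le n$, or one of the $m_{ji}$ with $1\le j<i\le n$; by Definition \ref{def_vtype} these are all rational functions of $x_1,\dots,x_n$ with no singularities on $D$.

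Next I would pass to $\mathbb{C}^n$. Viewed as functions of $(x_1,\dots,x_n)\in\mathbb{C}^n$, both sides of the displayed identity are matrices with rational-function entries — the left side because $a_{ij}=f_j(x_i)$ with each $f_j$ rational, the right side because each of its entries is a polynomial in the (rational) entries of the bidiagonal factors. Hence every entry of $A-L_1\cdots L_{n-1}D'U^{(n-1)}\cdots U^{(1)}$ is a rational function of the nodes, and by the previous paragraph each such entry vanishes on $\{(x_1,\dots,x_n):x_1<\cdots<x_n,\ x_i\in D\}$, a set containing a nonempty open subset of $\mathbb{R}^n$. A rational function over $\mathbb{C}$ that vanishes on a subset of $\mathbb{R}^n$ with nonempty interior is identically zero (clear denominators and apply the corresponding fact for polynomials). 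Therefore the two sides of \eqref{SBD} coincide as matrices of rational functions over $\mathbb{C}$, hence coincide numerically at every point where both are defined.

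The argument is routine once this setup is in place; the one point needing care is the last step — matching ``both matrices in \eqref{SBD} are defined'' with ``none of the finitely many rational functions $f_j$, $s_{ij}$, $m_{ji}$ has a pole at the given node configuration'', so that the identity of rational functions may legitimately be evaluated pointwise there. I expect no further obstacle: the removal of the singularities is entirely contained in the Theorem, and the passage from real, distinct, ordered nodes to arbitrary complex ones is precisely the Zariski-density phenomenon that rational-function identities always obey.
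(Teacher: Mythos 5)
Your proof is correct, and it follows the same overall strategy as the paper: establish the identity \eqref{SBD} on the configuration of strictly increasing real nodes in $D$ (where the ordinary decomposition \eqref{bidiag} and the Theorem apply), observe that every entry on both sides is a rational function of the nodes, and then extend the entrywise identity to all complex node configurations where both sides are defined. The only difference is the technical lemma used for the extension step: the paper treats each entry as a function of a single node $x_i$ on an open real interval and invokes the one-variable Identity Theorem for meromorphic functions, whereas you clear denominators and use the multivariate fact that a polynomial (hence a rational function) vanishing on a subset of $\mathbb{R}^n$ with nonempty interior vanishes identically. Your variant is arguably tidier, since it handles all $n$ variables simultaneously, while the paper's entrywise one-variable argument implicitly requires continuing in each node in turn; conversely, the paper's route stays within elementary single-variable complex analysis and cites a standard reference. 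Your closing caveat --- that ``both sides defined'' must be matched with ``no pole of the finitely many rational functions $f_j$, $s_{ij}$, $m_{ji}$ at the given nodes'' so that the formal identity may be evaluated pointwise --- is exactly the right point to be careful about, and it is handled the same way (implicitly) in the paper's proof.
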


\begin{proof}
\tr{
Since it is derived from the ordinary bidiagonal decomposition \eqref{bidiag}, the sin\-gula\-ri\-ty-free bidiagonal decomposition \eqref{SBD} is valid when $A$ is nonsingular and TN. This remains the case when all the nodes of $A$ are strictly increasing inside an open interval $(a,b)$ contained in the interval $D$ of total nonnegativity of $A$. 
}

\tr{
The $(i,j)$th entries on each side of \eqref{SBD} are rational functions of $x_i$ for all $i, j=1,2,\ldots,n$.
Indeed, on the left hand side, $a_{ij} = f_j(x_i)$, is a rational function of $x_i$ by definition \ref{def_vtype}. On the right hand side, the $(i,j)$th entry is a polynomial in the entries of the factors of \eqref{SBD}. These entries are obtained via the construction in section \ref{sec_sbdvand} from the entries of the ordinary bidiagonal decomposition \eqref{bidiag}, which are rational functions of the nodes $x_1,x_2,\ldots,x_n$ as either quotients of minors of $A$ or products of quotients of minors of $A$ \cite[Prop.~(3.1)]{koev}.
}

\tr{Since all rational functions are meromorphic on  $\mathbb{C}$, and the equality between the $(i,j)$th entries on each side of \eqref{SBD} holds on an open interval in $\mathbb{R}$ containing $x_i$:  $x_{i-1}<x_i<x_{i+1}$, for $1<i<n$ and $a<x_1<x_2$, $x_{n-1}<x_n<b$ for $i=1, i=n$, respectively, the Identity Theorem \cite[Thm.\ 3.2.6]{ablowitzfokas03}, implies that this equality holds for any complex values of the nodes where these entries are defined.
}
\end{proof}

\tr{
As a direct application to the ordinary Vandemonde matrices for example, as is evident from the formulas in section \ref{ovand}, the above Corollary immediately implies that their singularity-free bidiagonal decomposition is valid for any complex nodes .
}

\section{Our method and explicit formulas for Vandemonde-type matrices}
\label{sec_method}

Our method for producing a singularity-free bidiagonal decomposition for an $n\times n$ Vandermonde-type matrix $A$ of arbitrary rank works as follows:
\begin{enumerate}
\item We start with the ordinary bidiagonal decomposition $M=\mathcal{BD}(A)$. Since $A$ is Vandermonde-type, the entries $m_{ij}$ factor as:
$$
m_{ij} = s_{ij} v_{ij}
$$ for $1\le j\le i\le n$, where $v_{ij}$ are defined in \eqref{Vandermonde_bidiag} and $s_{ij}$ contain no singularities.

\item The singularity-free bidiagonal decomposition $[B,C]=\mathcal{SBD}(A)$, is then


\begin{alignat}{2}
b_{ij} & =  s_{ij}, && \quad  1\le j\le i\le n,
\label{bc_def1}
\\
b_{ij} & =  m_{ij}, && \quad 1\le i< j\le n,
\label{bc_def2}
\\
c_{ij} & =  x_{i-1}-x_{i-j}, && \quad 2\le j< i\le n+1,
\label{bc_def3}
\\
c_{ij} & =  1, && \quad {j>i}.
\label{bc_def4}
\end{alignat}

\end{enumerate}

In other words, to obtain the lower bidiagonal factors $L_i$ in \eqref{SBD} from the corresponding factors $L^{(i)}$ in \eqref{bidiag}, we remove the factors $v_{ij}$ from subdiagonal elements in $L^{(i)}$ and set the diagonal entries in $L^{(i)}$ as in \eqref{bc_def3}.

To obtain the diagonal factor in \eqref{SBD} we remove the factors $v_{ii}$ from the corresponding diagonal entry $m_{ii}$ in the diagonal factor in \eqref{bidiag}, $i=1,2,\ldots,n$. 

The upper bidiagonal factors $U^{(i)}$ remain intact, $U_i=U^{(i)}, \, i=1,2,\ldots,n-1,$
as per \eqref{bc_def2} and \eqref{bc_def4}.

To fully specify the entire decomposition, $[B,C]=\mathcal{SBD}(A)$, it suffices, therefore, to specify the entries $s_{ij}, i\ge j$, and $m_{ij}, i<j$, The rest of the entries in $B$ and $C$ are specified by \eqref{bc_def2} and \eqref{bc_def4}.

\subsection{Ordinary Vandermonde matrices}
\label{ovand}
The ordinary bidiagonal decomposition of an ordinary Vandermonde matrix is given in \eqref{Vandermonde_bidiag}.

Their singularity-free bidiagonal decomposition is such that 
$$
b_{ij} = 1
$$
for $i\ge j$ and 
$$
b_{ij} = x_j
$$
for $i<j$ and $c_{ij} = x_{i-1}-x_{i-j}$ for $2\le j< i\le n+1$ and $c_{ij} = 1$ otherwise.
In other words, the factors in \eqref{SBD} are 
\begin{equation}
L_i\! =\! \bmat{ccccccc}
1 \\
& \ddots \\
&& 1\\
&&1 & x_{n-i+1} - x_{n-i} \\
&&&1 & x_{n-i+2} - x_{n-i} \\
&&&&\!\!\!\!\!\!\! \ddots & \ddots \\
&&&&&\!\!\!\!\!\!\!\!\!\!\!\!\!\!\!\!  1 & \!\!\!\!\!\!\!\! x_n-x_{n-i}
\emat,
\nonumber
\end{equation}
$D=I$ and 
$$
U_i = \bmat{ccccccc}
1 \\
& \ddots \\
&& 1 & x_{n-i}\\
&&&\ddots & \ddots \\
&&&&1& x_{n-1}\\
&&&&&1\\
\emat.
$$

The singularity-free bidiagonal decomposition of a an ordinary Vandermonde matrix can be computed to high relative accuracy in $O(n^2)$ time using the routine \verb+STNBDVandermonde+ in our package STNTool \cite{koevwebpage}.

For example, if $A$ is the $3\times 3$ Vandermonde matrix in our example \eqref{Vand3-2} from the Introduction, then its singularity-free bidiagonal decomposition $[B,C]={\mathcal SBD}(A)$ is such that 
 \begin{equation}
B=
\bmat{ccc}
1 & x & x\\
1& 1& y\\
1&  1 & 1
\emat
\nonumber
\quad\mbox{and}\quad
C=
\bmat{cccc}
1 & 1 & 1 & 1\\
1& 1& 1 & 1\\
1&  y-x & 1 &1\\
1&  z-y & z-x &1
\emat.
\nonumber
\end{equation}

The same technique applies to all Vandemonde-type matrices. We demonstrate the derivation of one non-trivial case, the $q$-Bernstein Vandemonde matrices. In the Appendix (section \ref{sec_appendix}), we list the explicit formulas for several other classes of Vandermonde-type matrices. 

\subsection{$q$-Bernstein-Vandemonde matrices}
\label{sec_qBV}
The $q$-Bernstein-Vandermonde matrices are expressed in terms of $q$-integers and $q$-binomial coefficients: given $q > 0$ and any nonnegative integer $r$, a $q$-integer $[r]$ is defined as
\begin{equation}
[r]=\left\{
\begin{array}{ll}
(1-q^r)/(1-q), & q\ne 1,\\
r, & q = 1.
\end{array}
\right.
\end{equation}
For a nonnegative integer $r$, a $q$-factorial is defined as
\begin{equation}
[r]!=\left\{
\begin{array}{ll}
[r][r-1]\cdots[1], & r\ne 1,\\
1, & r = 0.
\end{array}
\right.
\nonumber
\end{equation}
The $q$-binomial coefficient is defined as
$$
\left[\genfrac{}{}{0pt}{0}{n}{r}\right] = \frac{[n][n-1]\cdots [n-r+1]}{[r]!}
$$
for integers $n \ge r \ge  0$ and as zero otherwise. 

The $q$-Bernstein-Vandermonde matrices generalize the Bernstein-Van\-der\-monde matrices \cite{MM07,MM13}, which are the $q=1$ case. The nonsingular case was developed in \cite{DP15qBV}.

The $q$-Bernstein polynomials of degree $n$ for $0 < q \le 1$ are defined in \cite{phillips97} as
\begin{equation}
b_{i,q}^n(x)= \left[\genfrac{}{}{0pt}{0}{n}{i}\right] x^i \prod_{s=0}^{n-i-1}(1-q^sx), \quad x\in[0,1],\quad i=0,1,\ldots,n.
\label{qBP}
\end{equation}

An $n\times n$ $q$-Bernstein-Vandermonde matrix $A$ with nodes $x_1,x_2,\ldots, x_{n}$
 is defined as
\begin{equation}
A=\left[b^{n-1}_{j-1,q}(x_i)\right]_{i,j=1}^{n}
\label{qBVmatrix}
\end{equation}
and is TN when  
\begin{equation}
0\le x_1\le x_2\le \cdots\le x_n<1.
\label{ordered}
\end{equation}
When the nodes $x_i, \; i=1,2,\ldots,n$, are also distinct, it has an ordinary bidiagonal decomposition $M=\mathcal{BD}(A)$ with parameters \cite{DP15qBV}
\begin{align}
 m_{ij} & = \frac {1-q^{n-j}x_{i-j} }{ 1-q^{n-j} x_{i-1} } 
 \prod _{s=0}^{n-1-j}
\frac{1-q^sx_i} { 1-q^s x_{i-1}}  \underbrace{
\prod_{k=i-j}^{i-2}\frac{x_{i}-x_{k+1}}{x_{i-1}-x_k}}_{v_{ij}},  
\nonumber
\\
 m_{ji} & = \frac{[n-i+1]}{[i-1]}\cdot \frac{x_j}{1-q^{n-i}x_j} \prod_{k=1}^{j-1} \frac{1-q^{n-i+1}x_k}{ 1-q^{n-i}x_k},
\label{mij_qBV}
\end{align}
for $1\le j< i\le n$ and
\begin{align}
m_{ii} & = \left[\genfrac{}{}{0pt}{0}{n-1}{i-1}\right] \frac{ \prod _{s=0}^{n-1-i}
(1-q^sx_i)}{ \prod _{k=1}^{i-1} 
(1-q^{n-i}x_k)}\underbrace{\prod_{k=1}^{i-1}(x_i-x_k)}_{v_{ii}}
\nonumber
\end{align}
for $1\le i\le n$.

With the entries of the ordinary bidiagonal decomposition of the ordinary Vandermonde matrix $v_{ij}$ defined as in \eqref{Vandermonde_bidiag}, we have that $m_{ij} = s_{ij} v_{ij}$ for $1\le j\le i\le n$, where 
\begin{align}
 s_{ij} & = \frac {1-q^{n-j}x_{i-j} }{ 1-q^{n-j} x_{i-1} } 
 \prod _{s=0}^{n-1-j}
\frac{1-q^sx_i} { 1-q^s x_{i-1}}, 
\label{sij}
 \\
s_{ii} & = \left[\genfrac{}{}{0pt}{0}{n-1}{i-1}\right] \frac{ \prod _{s=0}^{n-1-i}
(1-q^sx_i)}{ \prod _{k=1}^{i-1}
(1-q^{n-i}x_k)}.
\label{sij2}
\end{align}

\tr{
The $q$-Bernstein-Vandermonde matrix is therefore a Vandermonde-type matrix on $[0,1)$.
}

As described at the beginning of this section, we obtain the singularity-free bidiagonal decomposition of $A$ by removing the factors $v_{ij}$ from $m_{ij}$ for $i\ge j$ and setting 
$
c_{ij} = x_{i-1}-x_{i-j}
$ for $2\le j< i\le n+1$.

The decomposition $[B,C]=\mathcal{SBD}(A)$ is thus fully defined as in \eqref{bc_def1}--\eqref{bc_def4} with the $s_{ij},\, i\le j$, defined in \eqref{sij} and \eqref{sij2} and $m_{ji},\, i>j,$ defined in \eqref{mij_qBV}.

The singularity-free bidiagonal decomposition of a $q$-Bernstein-Vandermonde matrix can be computed to high relative accuracy in $O(n^2)$ time using the routine \verb+STNBDqBernsteinVandermonde+ in our package STNTool \cite{koevwebpage}.

\section{Derivative Vandermonde matrices}
\label{sec_derivative}

Our results apply to other classes of matrices which are not directly Van\-der\-monde-type. These include
\begin{itemize}
\item submatrices of Vandermonde-type matrices, 
\item products of Vandermonde-type matrices and other matrices, whose ordinary bidiagonal decompositions are known and contain no singularities. 
\end{itemize}
The former include the generalized Vandermonde matrices. The latter include the Laguerre matrices, the Bessel matrices, and Wronskian matrices of a basis
of exponential polynomials. We address these separately.

The {\em generalized Vandermonde} matrices 
$$
G_\lambda(x_1,\ldots,x_n) = \left[x_i^{j-1+\lambda_{n-j+1}} \right]_{i,j=1}^n
$$
are defined for integer {\em partitions} $\lambda = (\lambda_1,\lambda_2,\ldots,\lambda_n)$, where $\lambda_i$ are integers such that $\lambda_1\ge \lambda_2\ge \cdots \ge \lambda_n\ge 0$. They are submatrices of ordinary Vandermonde matrices (obtained by removing appropriate columns). Their singularity-free bidiagonal decompositions can thus be obtained by starting with the singularity-free bidiagonal decomposition of the ordinary Vandermonde matrix, derived in section \ref{ovand}, and removing the appropriate columns employing the methods of \cite{koevSTN}.

The singularity-free bidiagonal decomposition of a generalized Vandermonde matrix can be computed  to high relative accuracy in $O(\lambda_1n^2)$ time using the \verb+STNBDGeneralizedVandermonde+ routine in our package STNTool \cite{koevwebpage}. 

Several classes of TN matrices are products of a TN ordinary Vandermonde matrix (call it $V$) and a second TN matrix (call it $A$) whose ordinary bidiagonal decomposition is known and contains no singularities. These include:
\begin{itemize}
\item {\em Laguerre} \cite{DOP19Laguerre}, 
\item {\em Bessel} matrices \cite{DOP19Bessel}, and 
\item {\em Wronskian matrices of a basis
of exponential polynomials} \cite{MPR19-W}.
\end{itemize}
The singularity-free bidiagonal decomposition of the product $VA$ (or $AV$) can be obtained using the method for computing the singularity-free bidiagonal decomposition of a product of TN matrices from \cite[sec.\ 6]{koevSTN} (implemented in the routine \verb+STNProduct+ in our package STNTool \cite{koevwebpage}) with the singularity-free bidiagonal decomposition of $V$  from section \ref{sec_vand} and the ordinary bidiagonal decomposition of~$A$. 

\section{The $(n,n)$ entry of the bidiagonal factors must equal 1 for computations}
\label{sec_nn}

The algorithms of \cite{koevSTN} require that the $(n,n)$ entry of every bidiagonal factor in the singularity-free bidiagonal decomposition of a TN matrix be equal to 1. These algorithms rely heavily on this fact and it turns out that the assumption that this is always the case can be made without any loss of generality.

To that end, in this section, we present a method to ``fix'' any singularity-free bidiagonal decomposition \eqref{SBD} so that the bottom right-hand corner entries of all bidiagonal factors, $L_1, L_2, \ldots, L_{n-1}$ and $U_1, U_2, \ldots, U_{n-1}$ equal to 1. 

We do so by ``moving'' the non-unit $(n,n)$ entries of each lower bidiagonal factor into $D$ and adjusting the $(n,n-1)$ entries accordingly:

For any bidiagonal matrix $L_i$ and diagonal matrix $D_i=\diag(1,1,\ldots,1,x_i),$  $ i=1,2,\ldots,n-1,$ we can make the bottom right hand corner entry of the product $D_i L_i$ equal to one by factoring a new diagonal factor $D_{i+1}$ out of the right. Namely,
\begin{equation}
D_i L_i = \overline L_i D_{i+1},
\label{DLLD}
\end{equation}
where $\overline L_i$ equals $L_i$, except for $(\overline L_i)_{nn}=1,\, (\overline L_i)_{n,n-1} = L_{n,n-1} x_i$. Then $D_{i+1}=\diag(1,1,\ldots, x_i L_{nn})$.

This allows us, by starting with $D_1 = I$, to move the bottom right hand corner entries of the lower bidiagonal factors of $\mathcal{SBD}(A)$ into the diagonal factor,~$D$:
\begin{align}
A&=L_1 L_2 \cdots L_{n-1} \cdot D \cdot U_{n-1} U_{n-2} \cdots U_{1}
\label{SBD2}
\\
&=\underline{D_1 L_1} L_2 \cdots L_{n-1} \cdot D \cdot U_{n-1} U_{n-2} \cdots U_{1}, \quad\mbox{since }D_1=I,
\nonumber
\\
&=\overline L_1 \underline{ D_2 L_2} \cdots L_{n-1} \cdot D \cdot U_{n-1} U_{n-2} \cdots U_{1},\quad\mbox{using }D_1 L_1=\overline L_1 D_2\mbox{ as in \eqref{DLLD},}
\nonumber
\\
&=\ldots
\nonumber
\\
&=\overline L_1 \overline L_2 \cdots  \underline{D_{n-1}L_{n-1}} \cdot D \cdot U_{n-1} U_{n-2} \cdots U_{1}
\nonumber
\\
&=\overline L_1 \overline L_2 \cdots \overline L_{n-1} \cdot (D_n D)\cdot U_{n-1} U_{n-2} \cdots U_{1}
\nonumber
\end{align}
(the factors being transformed on each step using \eqref{DLLD} are underlined).

The procedure is analogous for the upper bidiagonal factors.

Thus, if \eqref{SBD2} is a singularity-free bidiagonal decomposition of the TN matrix $A$, then 
$$
A=\overline L_1 \overline L_2 \cdots \overline L_{n-1} \cdot \overline D  \cdot \overline U_{n-1} \overline U_{n-2} \cdots \overline U_{1}
$$
is a singularity-free idiagonal decomposition of $A$ with all $(n,n)$ entries of $\overline L_i$ and $\overline U_i$, $i=1,2, \ldots,n$, equal to~1.

The matrices $\overline L_i, \overline U_i, i=1,2,\ldots, n-1$, and $\overline D$ differ from the matrices $L_i, U_i$ and $D$, respectively, only in 
$$(\overline L_i)_{n,n}=(\overline U_i)_{n,n}=1$$ 
and 
\begin{align*}
(\overline  L_i)_{n,n-1} &= (L_i)_{n,n-1} \prod_{k=1}^{i-1} (L_k)_{n,n},\\
(\overline U_i)_{n,n-1} &= (U_i)_{n,n-1} \prod_{k=1}^{i-1} (U_k)_{n,n},\\
\overline D_{nn} & = D_{nn} \prod_{k=1}^{n-1} (L_k)_{n,n} \prod_{k=1}^{n-1} (U_k)_{n,n}.
\end{align*}

The routine \verb+STNFixBottomRightOfBD+ in our package STNTool \cite{koevwebpage} implements the technique from this section.

For example, by implementing the technique from this section to the sin\-gu\-larity-free bidiagonal decomposition of our $3\times 3$ example \eqref{Vand3-2} we obtain a new singularity-free bidiagonal decomposition with all $(n,n)$ entries of all bidiagonal factors now equal to 1:
\begin{align}
\bmat{ccc} 
1 & x & x^2 \\
1 & y & y^2 \\
1 & z & z^2 
\emat
&=
\bmat{ccc}
1 \\
& 1\\
& 1 & 1
\emat
\bmat{ccc}
1 \\
1 & y-x \\
& 1 & 1
\emat
\bmat{ccc}
1 \\
& 1 \\
&  & (z-y)(z-x)
\emat
\nonumber 
\\
& \phantom{=}\mbox{ }\;\times
\bmat{ccc}
1 & x\\
& 1 & y\\
&  & 1
\emat
\bmat{ccc}
1   \\
& 1& x\\
& & 1 
\emat.
\nonumber
\end{align}

\section{Numerical accuracy}
\label{sec_accuracy}
In the standard ``$1+\delta$'' model of floating point arithmetic \cite{higham02}, to which the IEEE 754 double precision arithmetic \cite{IEEE754} conforms, the result of any floating point calculation is assumed to satisfy
\begin{equation}
\mbox{fl} (a \odot b) = (a\odot b)(1+\delta),
\label{1plusdelta}
\end{equation}
where $\odot \in \{+,-\times,/\},\, |\delta|\le \varepsilon$, and $\varepsilon$ is tiny and is called {\em machine precision}.

For a computed quantity, $\hat x$ to {\em have high relative accuracy}, it means that it satisfies an error bound with its true counterpart, $x$
$$
|\hat x - x|\le \theta |x|,
$$
where $\theta$ is a modest multiple of $\varepsilon$. In other words, the sign and most significant digits of $x$ must be correct. In particular, if $x=0$, it must be computed exactly.

The above model directly implies that the accuracy in numerical calculations is lost due to one phenomenon only, known as subtractive cancellation \cite{demmel98}. It occurs when a subtraction of previously rounded off quantities results in the loss of significant digits.
Multiplication, division, and addition of same-sign quantities preserve the relative accuracy. Subtraction of initial data such as the nodes $x_i$ is also fine, since initial data is assumed to be exact: \eqref{1plusdelta} tells us the result of that subtraction is computed to high relative accuracy. The only subtractions in any singularity-free bidiagonal decomposition presented in this paper are between exact initial data: either in the form  $x_i-x_j$ between the nodes $x_i$ of the Vandermonde-type matrix as in \eqref{bc_def3} or $1-x_i$ between the exact double precision floating point number $1$ and a node $x_i$ as in \eqref{1minusxi}, \eqref{1minusxi2}, and~\eqref{1minusxi3}. 

Detailed error analyses for the ordinary bidiagonal decompositions of all nonsingular TN Vandermonde-type matrices presented in this paper have already been performed in the corresponding papers (e.g., \cite{MM13} for the Bern\-stein-Vandermonde matrices, etc.). All those decompositions are computable to high relative accuracy componentwise.

The new singularity-free bidiagonal decompositions inherit the same componentwise error bounds and are thus also computable to high relative accuracy: the offdiagonal entries in the bidiagonal factors $L_i$ and $U_i$ as well as the diagonal entries of the diagonal factor $D$ in the singularity-free bidiagonal decomposition \eqref{SBD} are the same as the corresponding entries in the ordinary bidiagonal decomposition \eqref{bidiag} except for the factors $v_{ij}$ from \eqref{Vandermonde_bidiag}. The diagonal entries in the bidiagonal factors $L_i$ and $U_i$ in \eqref{SBD} are either equal to 1 or are of the form $x_i-x_j$, the latter computable with relative error bounded by the machine precision, $\varepsilon$, per \eqref{1plusdelta}.

 \section{Numerical experiments}
 \label{sec_numexp}
 
 We performed extensive numerical tests to verify the correctness of the formulas we derived in this paper as well as their inherent accuracy in numerical computations. 
 We present one illustrative example in Figure \ref{fig_all}. Using the formulas in section \ref{sec_qBV}, we computed the eigenvalues 
 of the $24\times 24$ $q$-Bernstein-Vandermonde matrix $A$ with parameter $q=0.1$ and nodes
$$
0.1,0.2,0.2,0.2,0.3,0.31,0.32,0.33,0.34,0.35,0.36,0.37,0.38,0.39,$$
$$
0.5,0.6,0.7,0.7,0.7,0.7,0.7,0.7,0.8,0.9.
$$
 
 \begin{figure}[h]
\centerline{\resizebox{3in}{2.5in}{\includegraphics*{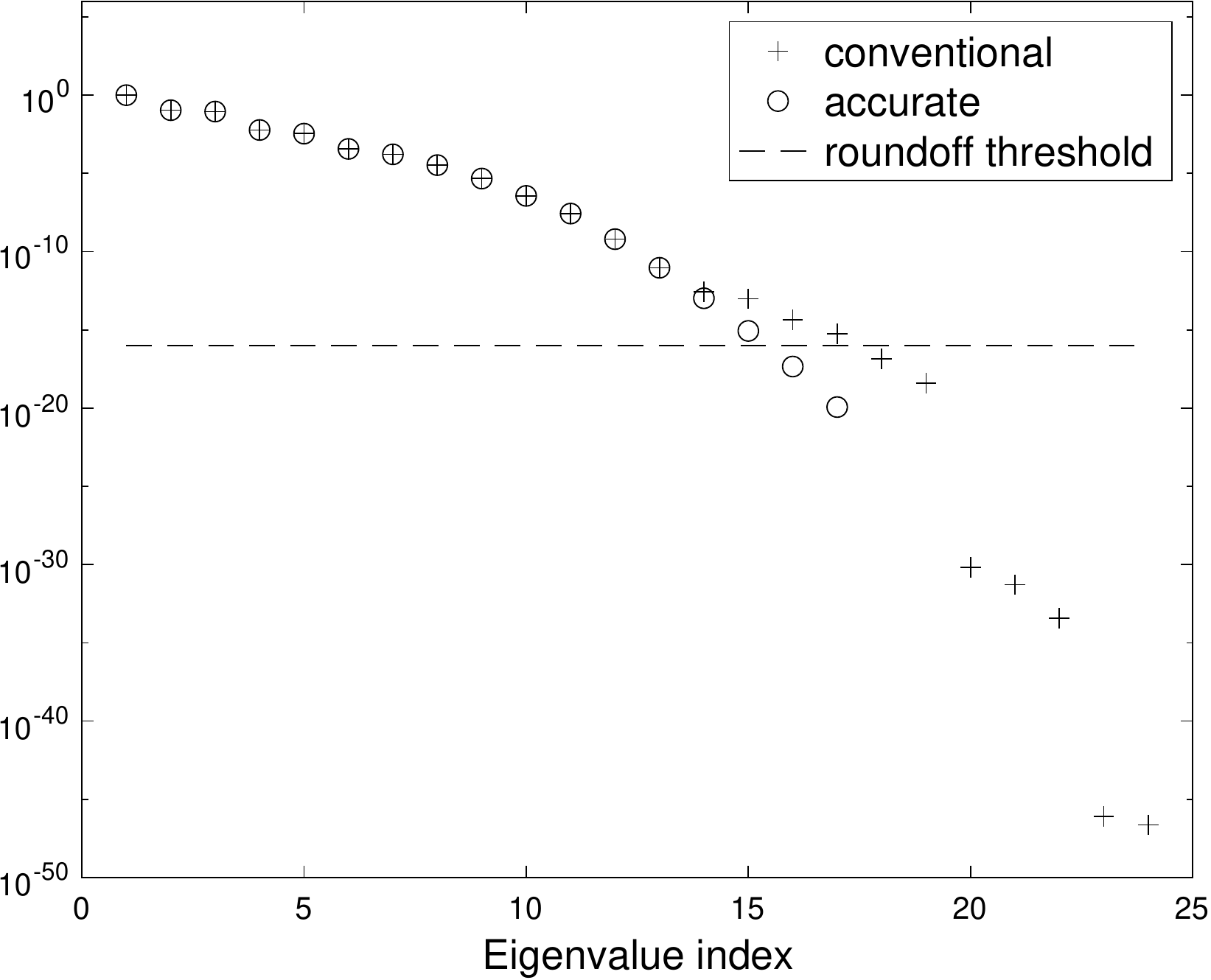}}}
\caption{The eigenvalues of a $24\times 24$ $q$-Bernstein-Vandermonde matrix as computed by the conventional eigenvalue algorithms of LAPACK as implemented by MATLAB's {\tt eig} and also by forming the accurate singularity-free bidiagonal decomposition as in section \ref{sec_method} and then using the accurate eigenvalue algorithm of \cite{koevSTN}.}
\label{fig_all}
\end{figure}
 
A $q$-Bernstein-Vandermonde matrix with distinct nodes is nonsingular \cite{DP15qBV}, thus the 17 rows corresponding to distinct nodes are linearly independent. The repeated nodes ($0.2$, three times, and $0.7$, six times) correspond to repeated rows in the matrix and thus the rank of the matrix is~17.
 
We started with the singularity-free bidiagonal decomposition of $A$, computed using the formulas in section \ref{sec_qBV}, and implemented in our software package STNTool  \cite{koevwebpage} as the routine \verb+STNBDqBernsteinVandermonde+. We computed the eigenvalues using the algorithm \verb+STNEigenValues+ \cite{koevSTN}. The matrix $A$ is never explicitly formed or needed in this part of the computation, which is not only accurate (as we see below), but also very efficient: The singularity-free bidiagonal decomposition takes $O(n^2)$ time and the eigenvalues take an additional $O(n^3)$ time \cite{koevSTN}.

For comparison, in double precision floating point arithmetic \cite{IEEE754}, we formed $A$ explicitly and computed its eigenvalues using the conventional eigenvalue algorithm of LAPACK \cite{lapackmanual3} (as implemented by \verb+eig+ in MATLAB \cite{matlab}). As expected, only the largest eigenvalues of $A$ are computed accurately by \verb+eig+. Further, since $A$ is unsymmetric and its largest eigenvalue is about 1, the complete loss of relative accuracy occurs in all eigenvalues smaller than about $10^{-12}$, a value much larger machine precision (about $10^{-16}$). The zero eigenvalues are lost to roundoff. This behavior is fully expected and justified for the general, structure-ignoring algorithms of LAPACK. It also underscores the utility of developing special algorithms for structured matrices, which deliver results to high relative accuracy with the same $O(n^3)$ efficiency.

For verification, we formed $A$, then computed its eigenvalues, in 60 decimal digit arithmetic using the software package Mathematica. All nonzero eigenvalues computed by \verb+STNEigenValues+ agreed with those computed by Mathematica to at least $14$ significant decimal digits. No amount of extra precision will allow us to reliably compute the zero eigenvalues using the conventional eigenvalue algorithms of LAPACK, thus the only reason we know the algorithm from \cite{koevSTN} computed the correct number of zero eigenvalues ($7$) is because we know that the rank of the matrix is 17.\footnote{Since the graph is log-scale, the zero eigenvalues are not depicted and the negative and complex eigenvalues returned by the conventional algorithm are displayed by their absolute values.} 

MATLAB implementations of the algorithms described in this paper are available online~\cite{koevwebpage}.

\section{Appendix}
\label{sec_appendix}

Here we present the explicit formulas for the singularity-free bidiagonal decompositions of the following Vandermonde-type matrices:
\begin{itemize}
\item $h$-Bernstein-Van\-der\-monde
\item Lupa\c s
\item rational Bern\-stein-Van\-der\-monde, and
\item Cauchy-Vandermonde matrices with one multiple pole.
\end{itemize}
This is not a complete list of Vandermonde-type matrices as this is an active area of research, but since we derived these formulas before realizing that all matrices of the Vandermonde-type follow the same pattern, we share them here and have implemented them in software \cite{koevwebpage}.

\subsection{Lupa\c{s} matrices}
\label{sec_Lupas}

The Lupa\c{s} $q$-analogues of the
 Bernstein functions of degree $n$ for $0<q \le 1$ are defined as \cite{lupas87}
 $$
 l_{i,q}^n(x)=\frac{a_{i,q}^n(x)}{w_q^n(x)}, \quad x\in [0,1],\quad i=0,1,\ldots,n, 
 $$
where
\begin{align*}
a_{i,q}^n(x) &= \left[\genfrac{}{}{0pt}{0}{n}{i}\right]  q^{\frac{i(i-1)}{2}} x^i (1-x)^{n-i}, \\
w_q^n(x) & = \sum_{i=0}^n a_{i,q}^n(x) = \prod_{i=2}^n(1-x+q^{i-1}x).
\end{align*}
For case $q = 1$ the Lupa\c{s} $q$-analogues of the Bernstein functions coincide with the Bernstein polynomials.

An $n\times n$ Lupa\c{s} matrix $A$ with nodes $x_1,x_2,\ldots,x_n$ is defined in \cite{delgadopena17} as 
$$
A=\left[l^{n-1}_{j-1,q}(x_i)\right]_{i,j=1}^n.
$$
For $0\le x_1\le x_2\le\cdots\le x_{n}<1$, the matrix $A$ is TN  \cite[Thm.\ 2.1]{delgadopena17}. It is also a Vandermonde-type matrix on \tr{$[0,1)$} with a singularity-free bidiagonal decomposition $[B,C]=\mathcal{SBD}(A)$ defined as in \eqref{bc_def1}--\eqref{bc_def4} with parameters 
\begin{align*}
s_{ij} & = \frac{(1-x_i)^{n-j}(1-x_{i-j-1})}
{(1-x_{i-1})^{n+1-j}}
\prod_{k=1}^{n-2}\frac{1-x_{i-1}+q^{k}x_{i-1}}{1-x_i+q^{k}x_i}
,  
\\ 
m_{ji} & = \frac{[n-i+1]q^{i-2}x_j}{[i-1](1-x_j)},\\
\intertext{for $1\le j< i\le n$ and}
s_{ii} &=\left[\genfrac{}{}{0pt}{0}{n-1}{i-1}\right]   \frac{q^{\frac{(i-1)(i-2)}{2}}(1-x_i)^{n-i}}{\prod_{k=2}^{n-1}(1-x_i+q^{k-1}x_i)\prod_{k=0}^{i-1}(1-x_k)}
\end{align*}
for $1\le i\le n$.

The singularity-free bidiagonal decomposition of a Lupa\c{s} matrix can be computed to high relative accuracy in $O(n^2)$ time using the routine \verb+STNBDLupas+ in our package STNTool \cite{koevwebpage}.

\subsection{$h$-Bernstein-Vandermonde matrices}
\label{sec_hBV}

These matrices are generalization of the Bernstein-Vandermonde matrices \cite{MM07,MM13}, which are the $h=0$ case. The nonsingular case was developed in \cite{MMV19}.

The $h$-Bernstein polynomials of degree $n$ for a real parameter $h \ge 0$ are defined in \cite{MMV19} as
\begin{equation}
b_{i,h}^n(x)= \left(\genfrac{}{}{0pt}{0}{n}{i}\right)\frac{\prod_{k=0}^{i-1}(x+kh)\prod_{k=0}^{n-i-1}(1-x+kh)}{\prod_{k=0}^{n-1}(1+kh)}, \; x\in[0,1],\; 0\le i\le n.
\label{hBP}
\end{equation}

An $n\times n$ $h$-Bernstein-Vandermonde matrix with nodes $x_1,x_2,\ldots, x_{n}$
 is defined as
$$
A=\left[b^{n-1}_{j-1,h}(x_i)\right]_{i,j=1}^{n}.
$$
For $0\le x_1\le x_2\le \cdots\le x_n<1$, $A$ is a TN Vandermonde-type matrix \cite{MMV19}  on \tr{$[0,1)$} and has a singularity-free bidiagonal decomposition $[B,C]=\mathcal{SBD}(A)$ defined as in \eqref{bc_def1}--\eqref{bc_def4} with parameters 
\begin{align}
s_{ij} & = \frac{ 1-x_{i-j}+(n-j)h} {1-x_{i-1}+(n-j)h} \prod_{k=0}^{n-j-1} \frac{1-x_i+kh}{1-x_{i-1}+kh},
\label{1minusxi}
\\
m_{ji} &= \frac{n-i+1}{i-1}\cdot \frac{ (x_j+(i-j-1)h)\prod_{k=1}^{j-1}(1-x_k+(n-i+1)h)}{\prod_{k=1}^j(1-x_k+(n-i)h)},
\nonumber
\\
\intertext{for $1\le j< i\le n$ and}
s_{ii} &=  \left(\genfrac{}{}{0pt}{0}{n-1}{i-1}\right)
\frac{\prod_{k=0}^{n-i-1}(1-x_i+kh)}{\prod_{k=1}^{n-i-1}(1+kh)\prod_{k=1}^{i-1}(1-x_k+(n-i)h)}
\nonumber
\end{align}
for $1\le  i\le n$.

The singularity-free bidiagonal decomposition of an $h$-Bernstein-Van\-der\-mon\-de matrix can be computed to high relative accuracy in $O(n^2)$ time using the routine \verb+STNBDhBernsteinVandermonde+ in our package STNTool \cite{koevwebpage}.

\subsection{Rational Bernstein-Vandermonde matrices} 
\label{sec_rbv}

A rational Bernstein-Vandermonde matrix with nodes $x_1, x_2,\ldots, x_n$ and positive weights $w_1,w_2,\ldots,w_n$ is defined as \cite{DP13}:
$$
R=\left[r_{j-1}^{n-1}(x_i)\right]_{i,j=1}^n,
$$
where
\begin{equation}
 r_{j-1}^{n-1}(x)=\frac{w_{j}\,b_{j-1}^{n-1}(x)}{W(x)},\quad x\in [0,1],\quad j\in\{1,2,\ldots,n\},
\label{def_rbv}
\end{equation}
and  $W(x)=\sum_{j=1}^{n} w_{j}\,b_{j-1}^{n-1}(x)$. The functions $b_{j}^{n}(x)$ are the Bernstein polynomials of degree $n$. They equal the $q$-Bernstein polynomials \eqref{qBP} for $q=1$ and the $h$-Bernstein polynomials \eqref{hBP} for $h=0$, i.e., 
\[ 
 b_{i}^{n}(x)=
\left(\genfrac{}{}{0pt}{0}{n}{i}\right)
x^i(1-x)^{n-i},\quad i\in\{0,1,\ldots,n\},\quad x\in [0,1].
\]
 For $0< x_1\le x_2\le \cdots\le x_n<1$, the matrix $R$ is TN \cite[Thm.\ 3.1]{DP13}, which is of Vandermonde-type on \tr{$(0,1)$} and
 has a singularity-free bidiagonal decomposition $[B,C]=\mathcal{SBD}(R)$ defined as in \eqref{bc_def1}--\eqref{bc_def4} with parameters 
\begin{align}
s_{ij}&=\frac{ W(x_{i-1})(1-x_i)^{n-j} (1-x_{i-j})
 }
{W(x_i)(1-x_{i-1})^{n-j+1}
 },
 \label{1minusxi2}
\\
m_{ji}&=\frac{w_i}{w_{i-1}}\cdot \frac{n-i+1}{i-1}\cdot \frac{x_j}{1-x_j},
 \label{1minusxi3}
\\
\intertext{for $1\le j< i\le n$ and}
s_{ii}&= \left(\genfrac{}{}{0pt}{0}{n-1}{i-1}\right)\frac{w_i(1-x_i)^{n-i}
}{ W(x_i)\prod_{k=1}^{i-1}(1-x_k)}
\nonumber
\end{align}
for $1\le i\le n$.

While we like the above expressions for their symmetry, an alternative sin\-gu\-la\-ri\-ty-free bidiagonal decomposition of $R$ can be obtained by recognizing, from \eqref{qBVmatrix} and \eqref{def_rbv}, that 
$R=W_1^{-1} A W_2,$ where $A$ is the $q$-Bernstein-Vandermonde matrix for $q=1$, and 
\begin{align*}
W_1&=\diag(W(x_1),W(x_2), \ldots, W(x_n))\\
W_2&=\diag(w_1,w_2,\ldots,w_n)
\end{align*}
are positive diagonal matrices. Thus if $\mathcal{SBD}(A)$ is given by \eqref{SBD}, then $R$ has a singularity-free  bidiagonal decomposition 
\begin{equation}
R=(W_1^{-1}L_1)L_2\cdots L_{n-1} D U_{n-1} U_{n-2} \cdots (U_1 W_2),
\label{RBValt}
\end{equation}
where the first and last factors in the parentheses, $W_1^{-1}L_1$ and $U_1 W_2$,, are nonnegative bidiagonal matrices with the same nonzero pattern as $L_1$ and $U_1$, respectively. Thus \eqref{RBValt} is another bidiagonal decomposition of $R$ and an example of how the nonuniqueness of the singularity-free bidiagonal decomposition \eqref{SBD} can play out.

The singularity-free bidiagonal decomposition of a rational Bernstein-Van\-der\-monde matrix can be computed to high relative accuracy in $O(n^2)$ time using the routine \verb+STNBDRationalBernsteinVandermonde+ in our package STNTool \cite{koevwebpage}.

\subsection{Cauchy--Vandermonde matrices with one multiple pole} 

A Cauchy-Vandermonde matrix with nodes $x_1,\ldots,x_n$ and one pole $d$ of multiplicity $s\ge 1$ is defined as \cite{martinezpena03}
$$
A=\left[
  \begin{array}{ccccccc}
    \frac{1}{(x_1+d)^s} & \cdots & \frac{1}{x_1+d} & 1 & x_1 & \cdots & x_1^{n-s-1} \\
    \frac{1}{(x_2+d)^s} & \cdots & \frac{1}{x_2+d} & 1 & x_2 & \cdots & x_2^{n-s-1} \\
    \vdots &  & \vdots & \vdots & \vdots &  & \vdots\\
    \frac{1}{(x_n+d)^s} & \cdots & \frac{1}{x_n+d} & 1 & x_n & \cdots & x_n^{n-s-1} \\
  \end{array}
\right]
$$
and is totally nonnegative when $0\le x_1\le x_2 \le \cdots \le x_n$ and $d>0$. It is a Vandermonde-type matrix on \tr{$[0,\infty)$ for $d>0$} and has a decomposition $[B,C]=\mathcal{SBD}(A)$ with parameters 
\begin{align*}
s_{ij} & = \left(\frac{x_{i-1}+d}{x_i+d}\right)^s,  
\\ 
m_{ji} & = \left\{
                                       \begin{array}{ll}
                                         x_j+d, & \hbox{$0<i-j\leq s$} \\
                                         x_j, & \hbox{$i-j> s$}
                                       \end{array}
                                     \right.
,\\
\intertext{for $1\le j< i\le n$ and}
s_{ii} &=\frac{1}{(x_i+d)^s}
\end{align*}
for $1\le i\le n$.

The singularity-free bidiagonal decomposition of a Cauchy-Van\-der\-monde matrix with one multiple pole can be computed to high relative accuracy in $O(n^2)$ time using the routine \verb+STNBDCauchyVandermonde1pole+ in our package STNTool \cite{koevwebpage}.

\subsection{Other Vandermonde-type matrices}

The method presented in this section can be used analogously to derive sin\-gu\-lar\-i\-ty-free bidiagonal decompositions of other Vandermonde-type matrices, for example:
\begin{itemize}
\item Said-Ball Vandermonde \cite{marcomartinez10,marcomartinez16},
\item Rational Said-Ball Vandermonde \cite{DP13},
\item Collocation and Wronskian matrices of Jacobi polynomials \cite{MPR19-WJ}.
\end{itemize}
This is likely an incomplete list as this currently appears to be an area of active research.

\section{Acknowledgements}

\tr{ 
We thank the anonymous referees for the very careful reading of our manu\-script as well as their very constructive comments and suggestions, which greatly improved the presentation and clarity of the paper and, in particular, for pointing out the significance of the result now formulated in Corollary \ref{Cor1}. We also thank Slobodan Simi\'c for his help with the proof of that corollary.
}

P.\ Koev thanks the University of California -- Berkeley for the kind hospitality during his sabbatical leave in 2019.

\bibliographystyle{elsarticle-num}
\bibliography{../SJSU/papers/biblio}

\end{document}